\newtheorem{theorem}{Theorem}
\newtheorem{example}[theorem]{Example}
\newtheorem{lemma}[theorem]{Lemma}
\newcommand{\C}{\mathbb{C}}
\newcommand{\N}{\mathbb{N}}
\newcommand{\R}{\mathbb{R}}
\begin{document}

\title{The apolar inner product and a Bombieri type inequality for polynomials}
\author{ J. M. Aldaz, A. Bravo and H. Render}
\address{J.M. Aldaz: Instituto de Ciencias Matem\'aticas (CSIC-UAM-UC3M-UCM)
and Departamento de Matem\'aticas, Universidad Aut\'onoma de Madrid,
Cantoblanco 28049, Madrid, Spain.}
\email{jesus.munarriz@uam.es}

\address{A. Bravo: Instituto de Ciencias Matem\'aticas (CSIC-UAM-UC3M-UCM)
and Departamento de Matem\'aticas, Universidad Aut\'onoma de Madrid,
Cantoblanco 28049, Madrid, Spain.}
\email{ana.bravo@uam.es}

\address{H. Render: School of Mathematical Sciences, University College
Dublin, Dublin 4, Ireland.}
\email{hermann.render@ucd.ie}

\thanks{2020 Mathematics Subject Classification: \emph{Primary: 26D05}, 
\emph{Secondary:  15A63}}
\thanks{Key words and phrases: \emph{Apolar inner product,  Bombieri's inequality}}

\begin{abstract} We consider inequalities of Bombieri type for polynomials that need not be homogeneous, using the apolar inner product. 
\end{abstract}

\maketitle

\section{Introduction} The apolar inner product $\langle  \cdot,\cdot\rangle_{a}$ on spaces of polynomials originates within the Theory of Invariants, from the XIX century, and has since found several other applications. For instance, it is essential in the proof of the Fischer decomposition of polynomials, which in turn helps to find polynomial solutions to certain differential equations (in this regard, among the many references that could be given we mention  \cite{Shap89}, \cite{EbSh95}, \cite{EbSh96}, \cite{Rend08}, \cite{KL18}, \cite{AlRe23})). 

Related to the apolar inner product is the celebrated Bombieri's inequality for homogeneous polynomials $f$ and $g$ (cf. for instance \cite{BBEM90}, \cite{Beau92}, \cite{Beau96}, \cite{Beau97}, \cite{Etay21}, \cite{Pina12}, \cite{Zeil}) which for the apolar norm
  states that
$$\|f g\|_a \ge \|f\|_a \|g\|_a.
$$
By induction, the extension to the case of $n$ homogeneous polynomials is immediate:
\begin{equation}\label{multBombieri}
\|f_1 \cdots f_n\|_a \ge \|f_1\|_a \cdots \|f_n\|_a.
\end{equation}
 As is well known, the inequality $\|f g\|_a \ge \|f\|_a \|g\|_a$ still holds when only one of the polynomials is homogeneous, cf. \cite{Zeil} for instance, or the final part of the next section.
Hence, it is natural to explore what happens when both  $f$ and $g$ are non-homogeneous. This paper, which studies the preceding question, is organized as follows.
The relevant definitions and standard results appear in Section 2. A simple example given in Section 3, shows that Bombieri's inequality does not hold with constant 1 if both polynomials are non-homogeneous. But by reduction to the homogeneous case and an application of Bombieri's inequality, we show that letting $n_i$ be the degree of $f_i$, the inequality
	\[
[(n_1 +  \cdots + n_s)! ]^{1/2 }\|f_1 \cdots f_n\|_a \ge \|f_1\|_a \cdots \|f_n\|_a
\]		
 holds. It may well be that a direct proof will lead to better bounds, 
but we have not found such a proof from first principles. Section 4 presents some special cases where better bounds do hold. Finally, in Section 5 we consider powers of a single polynomial $P$. 

\section{Notation, definitions and background results}

Denote by $\mathbb{C}[z_{1},\dots,z_{d}]$ the vector space of all
polynomials  in  $z=\left(  z_{1},\dots,z_{d}\right)$ with complex coefficients, and by $\mathbb{C} [z_{1},\dots,z_{d}]_{\le n}$  the subspace of  $\mathbb{C}[z_{1},\dots,z_{d}]$ consisting of polynomials with degree bounded by $n$.
Let $P$ and $Q$ be polynomials of degree $N$ and $M$, respectively given by
\[
P\left(  z \right)  =\sum_{\alpha\in\mathbb{N}^{d},\left\vert
\alpha\right\vert \leq N}c_{\alpha}z ^{\alpha}\text{ and }Q\left(  z\right)
=\sum_{\alpha\in\mathbb{N}^{d},\left\vert \alpha\right\vert \leq
M}d_{\alpha}z^{\alpha},
\]
where the standard notation for multi-indices is used: for $\alpha=\left(  \alpha
_{1},\dots ,\alpha_{d}\right)  \in\mathbb{N}^{d}
$, we write   $z^{\alpha
}=z_{1}^{\alpha_{1}}\cdots  z_{d}^{\alpha_{d}},$  $\alpha!=\alpha_{1}
! \cdots \alpha_{d}!$, and $\left\vert \alpha\right\vert =\alpha_{1}+\cdots
+\alpha_{d}$. We call the largest value of $\left\vert \alpha\right\vert $ such that $c_\alpha \ne 0$ the total degree of $P$, or
just its degree.

Given $P\left(  z\right)  $, we denote by
$P^{\ast}\left(  z\right)  $ the polynomial obtained from $P\left(  z\right)
$ by conjugating its coefficients, and by $P\left(  D\right)  $  the linear
differential operator obtained from $P\left(  z\right)
$ by replacing each variable $z_{j}$ with the
differential operator $\frac{\partial}{\partial z_{j}}$, for $j = 1, \dots ,d$.
The \emph{apolar inner product} $\langle  \cdot,\cdot\rangle_{a}$ on
$\mathbb{C}[z_{1},\dots,z_{d}]$ is defined by
\begin{equation}\label{apolar}
\left\langle P,Q\right\rangle_{a} :=\left( Q^*\left(
D\right)  P\right) \;(0)=\sum_{\alpha\in\mathbb{N}^{d}}\alpha!c_{\alpha
}\overline{d_{\alpha}},
\end{equation}
and the associated {\em apolar norm}, by
$
\left\Vert f\right\Vert _{a}=\sqrt{\left\langle f,f\right\rangle_{a}}.
$
Note that for monomials $z^\alpha$ and $z^\beta$, definition (\ref{apolar}) means that $\left\langle z^\alpha, z^\beta \right\rangle_{a} = 0$ if $\alpha \ne \beta$, and $\left\langle z^\alpha, z^\alpha \right\rangle_{a} = \alpha !$. Furthermore, from these relations definition (\ref{apolar}) is recovered by sesquilinearity.

Bombieri's inner product $\left\langle \cdot, \cdot \right\rangle_{b}$ is the apolar inner product  with a different normalization: $\left\langle z^\alpha, z^\alpha \right\rangle_{b} = \alpha !/|\alpha|!$. Thus, $\left\langle \cdot, \cdot \right\rangle_{b}$ depends on the total degree $|\alpha|$ of the monomials, or more generally, the total degree of the homogeneous polynomials.

 It is well known and easy to see that Bombieri's inequality extends to the case were one of the polynomials is arbitrary and the other is homogeneous. Let $f_m$ be homogeneous of total degree $m$ and let $P$ be a non-zero polynomial of total degree $k$, say 
\begin{equation*}
P\left( z \right) =P_{k}\left( z \right) +\cdots +P_{0}\left( z \right),
\end{equation*}
where for each $j=0, \dots ,k,$
 $P_{j}\left( z\right) $ is a homogeneous polynomial
of total degree $j$.  We call $P_k$ the principal part of $P$.
By the Pythagorean Theorem and Bombieri's inequality for the homogeneous case we have 
\[
\left\Vert P \cdot f_{m} \right\Vert _{a}^{2}
=
\left\Vert \sum_{j = 0}^k P_j\cdot f_m \right\Vert _{a}^{2}
=
\sum_{j = 0}^k  \left\Vert P_j\cdot f_m \right\Vert _{a}^{2}
\geq
\sum_{j = 0}^k  \left\Vert P_j \right\Vert _{a}^{2}  \left\Vert f_m \right\Vert _{a}^{2}
=
\left\Vert P \right\Vert _{a}^{2}  \left\Vert f_m \right\Vert _{a}^{2}.
\]

\section{A Bombieri type theorem}

In general, Bombieri's inequality  with constant 1 can fail: consider for instance 
$$
(x - 1) ( x + 1) = x^2 - 1.
$$
 Then $\|(x - 1)\|_a^2 \| ( x + 1)\|_a^2 = 4$, while  $\|x^2 - 1\|_a^2 = 3$.
 
 \vskip .2 cm
 
 Thus, Bombieri's inequality does not extend to the general case without some modification.

\begin{theorem} \label{BombieriType}
	Let $P^{(1)} ,\dots ,  P^{(s)} \in \mathbb{C}[z_{1},\dots,z_{d}]$ be nonzero 
	polynomials of   total degrees $n_1, \dots , n_s$ respectively.  Then
	\[
(n_1 +  \cdots + n_s)! \left\Vert P^{(1)} \cdots   P^{(s)} \right\Vert _{a}^{2}
\ge
\left\Vert P^{(1)} \right\Vert _{a}^{2}   \cdots \left\Vert P^{(s)} \right\Vert _{a}^{2}.
\]		
\end{theorem}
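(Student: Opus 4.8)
The plan is to reduce the non-homogeneous problem to the homogeneous case by the standard homogenization trick, which lets us invoke the multiplicative Bombieri inequality \eqref{multBombieri} that holds with constant $1$.  Concretely, given a polynomial $P^{(i)}$ of total degree $n_i$ in the variables $z_1,\dots,z_d$, I would introduce one extra variable $z_0$ and form the homogenization $\widetilde{P}^{(i)}(z_0,z_1,\dots,z_d) := z_0^{n_i}\, P^{(i)}(z_1/z_0,\dots,z_d/z_0)$, a genuinely homogeneous polynomial of degree $n_i$ in $d+1$ variables.  The key observation driving the whole argument is how the apolar norm transforms under homogenization: each monomial $c_\alpha z^\alpha$ of degree $|\alpha| = j \le n_i$ becomes $c_\alpha z_0^{\,n_i - j} z^\alpha$, and the apolar square-norm of this term changes from $|c_\alpha|^2 \alpha!$ to $|c_\alpha|^2\,(n_i - j)!\,\alpha!$.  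Thus homogenization multiplies the weight of each degree-$j$ component by the factor $(n_i - j)!$, so that $\|\widetilde P^{(i)}\|_a^2$ differs from $\|P^{(i)}\|_a^2$ in a controlled, term-by-term way; the cleanest bound is simply $\|\widetilde P^{(i)}\|_a^2 \ge \|P^{(i)}\|_a^2$, since every factor $(n_i-j)!$ is at least $1$.

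Next I would apply \eqref{multBombieri} to the homogeneous polynomials $\widetilde P^{(1)},\dots,\widetilde P^{(s)}$, whose product $\widetilde P^{(1)}\cdots \widetilde P^{(s)}$ is homogeneous of degree $n_1+\cdots+n_s =: N$.  This immediately gives
\[
\bigl\| \widetilde P^{(1)}\cdots \widetilde P^{(s)} \bigr\|_a^2 \ge \bigl\| \widetilde P^{(1)} \bigr\|_a^2 \cdots \bigl\| \widetilde P^{(s)} \bigr\|_a^2 \ge \bigl\| P^{(1)} \bigr\|_a^2 \cdots \bigl\| P^{(s)} \bigr\|_a^2,
\]
where the second inequality uses the term-by-term comparison from the previous paragraph for each factor.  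It remains only to compare the left-hand side with the quantity $\|P^{(1)}\cdots P^{(s)}\|_a^2$ that actually appears in the theorem.

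The crux is therefore the reverse comparison: bounding $\|\widetilde P^{(1)}\cdots \widetilde P^{(s)}\|_a^2$ from above in terms of $\|P^{(1)}\cdots P^{(s)}\|_a^2$.  The homogenization of the product $P^{(1)}\cdots P^{(s)}$ is exactly the product of the homogenizations, $\widetilde P^{(1)}\cdots \widetilde P^{(s)} = \widetilde{(P^{(1)}\cdots P^{(s)})}$, so I need to control the apolar norm of a homogenization of a degree-$N$ polynomial in terms of the original.  Writing $R := P^{(1)}\cdots P^{(s)}$ and decomposing it into homogeneous parts $R = \sum_{j=0}^N R_j$, the homogenization reweights the degree-$j$ part by $(N-j)!$, so $\|\widetilde R\|_a^2 = \sum_{j=0}^N (N-j)!\,\|R_j\|_a^2 \le N! \sum_{j=0}^N \|R_j\|_a^2 = N!\,\|R\|_a^2$, since $(N-j)! \le N!$ for all $j$.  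Combining this upper bound with the lower bound from Bombieri's inequality yields $N!\,\|R\|_a^2 \ge \|P^{(1)}\cdots P^{(s)}\|_a^2$ restored into the desired form $(n_1+\cdots+n_s)!\,\|R\|_a^2 \ge \prod_i \|P^{(i)}\|_a^2$, which is precisely the claim.  The main obstacle I anticipate is verifying the exact weight transformation $\|z_0^{n_i-j}z^\alpha\|_a^2 = (n_i-j)!\,\alpha!$ and checking that cross-terms in the square norm vanish — but since monomials of distinct multi-index remain orthogonal after homogenization (the extra variable $z_0$ only enlarges the multi-index), the Pythagorean splitting goes through cleanly, and the crude factorial bounds $1 \le (n_i-j)! \le N!$ suffice to close the argument.
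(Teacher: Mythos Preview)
Your proposal is correct and follows essentially the same route as the paper: introduce one extra variable to homogenize each $P^{(i)}$, use Bombieri's inequality \eqref{multBombieri} for the homogeneous polynomials, and sandwich using the two estimates $\|P\|_a^2 \le \|\widetilde P\|_a^2 \le (\deg P)!\,\|P\|_a^2$ together with the fact that homogenization commutes with products. The paper isolates the commutativity and the two-sided norm comparison as separate lemmas, but the content and the final chain of inequalities are exactly what you wrote.
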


In order to prove the result, we introduce the following notation. Given $P$ of  degree $k$, say
\begin{equation*}
P\left( z \right) =P_{k}\left( z \right) +\cdots +P_{0}\left( z \right),
\end{equation*}
we select a variable $w$ different from $z_1, \dots, z_d$, and define the {\em one variable homogenization} of $P$ as
\begin{equation*}
P_{hom} \left( z , w \right) = P_{k}\left( z \right) + w P_{k -1}\left( z \right) + \cdots + w^k P_{0}\left( z \right).
\end{equation*}
Thus, $P_{hom}  \in \mathbb{C}[z_{1},\dots,z_{d}, w]$ is a homogeneous polynomial of degree $k$. The abbreviation $\left( z , w \right)$ obviously stands for the vector $\left( z_1, \dots, z_d , w \right)$.

The next Lemma states that the one variable homogenization commutes with taking products.

\begin{lemma}
\label{commute} Let $P^{(1)} ,\dots ,  P^{(s)} \in   \mathbb{C}[z_{1},\dots,z_{d}]$. Then 
	$
	P^{(1)}_{hom} \cdots P^{(s)}_{hom} = \left(P^{(1)} \cdots P^{(s)}\right)_{hom}.
	$
	\end{lemma}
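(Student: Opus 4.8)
The plan is to reduce the claim to a statement about the top-degree (principal) part of a product of polynomials, since the one-variable homogenization is entirely determined by how the product distributes across homogeneous components. First I would fix notation: write each $P^{(i)} = \sum_{j=0}^{n_i} P^{(i)}_{j}$ as its decomposition into homogeneous components, where $P^{(i)}_{j}$ is homogeneous of total degree $j$ and $P^{(i)}_{n_i}$ is its principal part (possibly zero in the lower terms, but nonzero in degree $n_i$). The homogenization then reads $P^{(i)}_{hom}(z,w) = \sum_{j=0}^{n_i} w^{\,n_i - j}\, P^{(i)}_{j}(z)$, a homogeneous polynomial in $(z,w)$ of degree $n_i$ in which the power of $w$ attached to each homogeneous piece is exactly the \emph{codegree} $n_i - j$.

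The key observation I would exploit is that for a single polynomial $P$ of degree $k$, the exponent of $w$ multiplying each homogeneous component $P_j$ is $k - j = \deg P - \deg P_j$; that is, $w$ tracks the difference between the degree of $P$ and the degree of the component. The crucial point to verify is that this bookkeeping is \emph{additive under multiplication}. Concretely, I would establish the pointwise identity $P_{hom}(z,w) = w^{\deg P}\, P(z/w)$ for $w \neq 0$ (equivalently $P_{hom}(z,w) = w^{k} P(z_1/w,\dots,z_d/w)$), because this substitution-based description of homogenization is manifestly multiplicative: $w^{k}P(z/w)\cdot w^{\ell}Q(z/w) = w^{k+\ell}(PQ)(z/w)$, and $\deg(PQ) = \deg P + \deg Q = k + \ell$ since the leading coefficients multiply to a nonzero leading coefficient over an integral domain.

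The main step is therefore to confirm that both sides agree as polynomials. For this I would argue that the two polynomials $P^{(1)}_{hom}\cdots P^{(s)}_{hom}$ and $(P^{(1)}\cdots P^{(s)})_{hom}$ are each homogeneous of the same degree $n_1 + \cdots + n_s$ (the left side as a product of homogeneous polynomials, the right side by definition of homogenization), and that they coincide on the Zariski-dense open set $\{w \neq 0\}$ via the substitution identity above. Since two polynomials agreeing on a dense subset are equal, the identity follows. Alternatively, and perhaps more cleanly for a self-contained write-up, I would verify the identity directly by comparing homogeneous components: the component of total degree $m$ (in the $z$-variables) in the product $\prod_i P^{(i)}$ is $\sum_{j_1+\cdots+j_s = m} \prod_i P^{(i)}_{j_i}$, and I would check that homogenizing this and multiplying the individual homogenizations both attach the weight $w^{(n_1+\cdots+n_s) - m}$ to exactly this term.

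I expect the main obstacle to be bookkeeping rather than conceptual: one must be careful that the degree of the \emph{product} equals the sum of the degrees (which requires that the product of the principal parts not vanish — guaranteed because $\mathbb{C}[z_1,\dots,z_d]$ is an integral domain and each $P^{(i)}_{n_i} \neq 0$), so that the exponent $w^{\,\deg(\prod_i P^{(i)}) - m}$ appearing on the right genuinely matches $\sum_i(n_i - j_i) = (\sum_i n_i) - m$ coming from the left. Once this degree-additivity is pinned down, it suffices to treat the case $s = 2$ and conclude by an immediate induction on $s$, so I would state the argument for two factors and remark that the general case follows by iterating.
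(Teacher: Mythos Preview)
Your proposal is correct, and your second alternative (comparing homogeneous components and checking that both sides attach the weight $w^{(n_1+\cdots+n_s)-m}$ to the degree-$m$ piece of the product) is essentially the paper's proof: the paper reduces to $s=2$ by induction and then does a direct monomial-by-monomial comparison indexed by $\gamma\in\mathbb{N}^d$, verifying that the term of $(PQ)_{hom}$ at $(\gamma,\,m+n-|\gamma|)$ matches that of $P_{hom}Q_{hom}$. Your organization by total degree rather than by individual monomial is a cosmetic difference only.

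Your first route, via the substitution identity $P_{hom}(z,w)=w^{\deg P}\,P(z/w)$, is a genuinely different and slicker argument that the paper does not use. It makes multiplicativity of homogenization transparent --- it is inherited from multiplicativity of evaluation --- and reduces the entire lemma to the single observation that $\deg\bigl(\prod_i P^{(i)}\bigr)=\sum_i n_i$, which you correctly justify by integrality of $\mathbb{C}[z_1,\dots,z_d]$. The price is a mild density (or ``equal on $w\neq 0$ implies equal'') step; the paper's approach avoids this by staying purely combinatorial, but at the cost of more bookkeeping and less conceptual clarity. Either argument is entirely adequate here.
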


\begin{proof} By induction it is enough to prove the case
$s = 2$.
Let $P , Q$ be polynomials with degrees $m$ and $n$ respectively, say
\[
P\left(  z \right)  =\sum_{\alpha\in\mathbb{N}^{d},\left\vert
\alpha\right\vert \leq m}c_{\alpha}z ^{\alpha}\text{ and }Q\left(  z\right)
=\sum_{\alpha\in\mathbb{N}^{d},\left\vert \alpha\right\vert \leq
n}d_{\alpha}z^{\alpha}.
\]
Select $\gamma \in \N^d$ with
$0 \le |\gamma | \le m + n$, and let $\ell_\gamma z^\gamma$ be the term of $PQ$ corresponding to $\gamma$.   Let $\le$ also denote the standard partial order on $\N^d$, that is, $\alpha \le \beta$ if and only if for all coordinates (so $1 \le i \le d$) we have $\alpha_i \le \beta_i$. Then 
\begin{equation}
\ell_\gamma z^\gamma 
=
\sum_{\alpha \le \gamma} c_{\alpha} z^\alpha d_{\gamma - \alpha} z^{\gamma - \alpha}.
\end{equation}
With the natural notation, the term of $(PQ)_{hom}$ corresponding to the vector $(\gamma, m + n - |\gamma|) \in \N^{d + 1}$ is thus
\begin{equation}
\ell_\gamma z^\gamma w^{m + n - |\gamma|}
=
\sum_{\alpha \le \gamma} c_{\alpha} z^\alpha d_{\gamma - \alpha} z^{\gamma - \alpha} w^{m + n - |\gamma|} 
= 
\sum_{\alpha \le \gamma} c_{\alpha} z^\alpha w^{m -  |\alpha|} d_{\gamma - \alpha} z^{\gamma - \alpha} w^{n -  |\gamma| +  |\alpha|},
\end{equation}
which coincides with the term of $P_{hom} Q_{hom}$ corresponding to the vector $(\gamma, m + n - |\gamma|)$.
\end{proof}

\begin{lemma}
\label{homogenization} Let $P  \in \mathbb{C}[z_{1},\dots,z_{d}]$ be a
	polynomial of total degree $|\alpha|$. Then 
	$$
	\|P\|_a^2 \le \|P_{hom}\|_a^2 \le   |\alpha| ! \|P\|_a^2.
	$$
	\end{lemma}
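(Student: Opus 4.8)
The plan is to exploit the orthogonality of distinct monomials under the apolar inner product so as to reduce both norms to weighted sums over the homogeneous components of $P$. Write $k := |\alpha|$ for the total degree of $P$ and decompose $P = P_k + P_{k-1} + \cdots + P_0$ into its homogeneous parts, where $P_j = \sum_{|\beta| = j} c_\beta z^\beta$. Since monomials $z^\beta$ of distinct total degrees are orthogonal for $\langle\cdot,\cdot\rangle_a$, the Pythagorean theorem gives $\|P\|_a^2 = \sum_{j=0}^k \|P_j\|_a^2$.

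The key step I would carry out is to record the effect of multiplying a homogeneous polynomial by a power of the fresh variable $w$. For a monomial $z^\beta w^m$ in the $d+1$ variables $(z_1,\dots,z_d,w)$ one has $\langle z^\beta w^m, z^\beta w^m\rangle_a = \beta!\,m!$, since the apolar norm of a monomial is the product of the factorials of its exponents. Hence for each homogeneous part $P_j$ the corresponding block $w^{k-j}P_j$ of $P_{hom}$ satisfies $\|w^{k-j}P_j\|_a^2 = (k-j)!\,\|P_j\|_a^2$. Because by construction $P_{hom} = \sum_{j=0}^k w^{k-j}P_j$ and the summands carry pairwise distinct powers of $w$, they are mutually orthogonal, so assembling gives $\|P_{hom}\|_a^2 = \sum_{j=0}^k (k-j)!\,\|P_j\|_a^2$.

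It then remains only to bound the weights. Since $0 \le k-j \le k$ for every $j \in \{0,\dots,k\}$, we have $1 = 0! \le (k-j)! \le k!$. Substituting these two bounds term by term into the identity for $\|P_{hom}\|_a^2$ and comparing with $\|P\|_a^2 = \sum_{j=0}^k \|P_j\|_a^2$ yields both desired inequalities simultaneously, with $k! = |\alpha|!$. There is no serious obstacle in this argument; the only point meriting care is the orthogonality bookkeeping that justifies splitting both norms across the homogeneous components, and this is immediate from the values of $\langle\cdot,\cdot\rangle_a$ on monomials.
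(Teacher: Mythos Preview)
Your argument is correct and is essentially the paper's own proof: you derive the identity $\|P_{hom}\|_a^2 = \sum_{j=0}^k (k-j)!\,\|P_j\|_a^2$ via orthogonality and then bound the weights $(k-j)!$ above by $k!$ and below by $1$. The only cosmetic difference is that the paper dismisses the lower bound as ``obvious'' rather than reading it off the same identity.
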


\begin{proof} The first inequality is obvious, since all terms of $P_{hom}$ have degree at least as large as those of $P$
and the coefficients are the same. For the second inequality, writing $P =P_{k} +\cdots +P_{0}$ (where each $P_j$ is homogeneous of degree $j$, and $P_k \ne 0$, so the total degree is $|\alpha | = k$) we have 
\begin{equation*}
\|P_{hom}\|_a^2
=
\left\langle P_{hom} ,P_{hom} \right\rangle_{a}   
=  
\left\langle \sum_{j = 0}^k w^{k - j} P_j, \sum_{j = 0}^k w^{k - j} P_j \right\rangle_{a}
=  
\sum_{j = 0}^k \left\langle  w^{k - j} P_j,  w^{k - j} P_j \right\rangle_{a}
\end{equation*}
\begin{equation*}
=  
\sum_{j = 0}^k (k - j)!\left\langle   P_j,  P_j \right\rangle_{a}
\le
k! \sum_{j = 0}^k \left\langle   P_j,  P_j \right\rangle_{a} = k! \|P\|_a^2.
\end{equation*}
\end{proof}

{\em Proof of Theorem \ref{BombieriType}.}  Let $P^{(1)} ,\dots ,  P^{(s)}  \in \mathbb{C}[z_{1},\dots,z_{d}]$ be nonzero 
	polynomials of  total degrees $n_1, \dots , n_s$ respectively.
  Then by the preceding lemmas and Bombieri's inequality (\ref{multBombieri}) for several polynomials, we have
	$$
	(n_1 +  \cdots + n_s)! \left\Vert P^{(1)} \cdots   P^{(s)} \right\Vert _{a}^{2}
\ge
\left\Vert \left(P^{(1)} \cdots P^{(s)}\right)_{hom} \right\Vert _{a}^{2}
$$
$$
=
\left\Vert P^{(1)}_{hom} \cdots P^{(s)}_{hom} \right\Vert _{a}^{2}
\ge
\left\Vert P^{(1)}_{hom} \right\Vert _{a}^{2} \cdots \left\Vert P^{(s)}_{hom} \right\Vert _{a}^{2}
\ge
\left\Vert P^{(1)} \right\Vert _{a}^{2} \cdots \left\Vert P^{(s)} \right\Vert _{a}^{2}.
$$
\qed		

Since the large constants come from the homogenization of the polynomials, having the additional information that the coefficients of the low order terms are zero yields the following theorem, with the same proof.

\begin{theorem} \label{BombieriTypebis}
	Let $P , Q\in \mathbb{C}[z_{1},\dots,z_{d}]$ be nonzero 
	polynomials of   total degrees $m$ and $n$ respectively, such that for some $0\le j \le m$ and some $0\le i \le n$, we have   $P\left( z \right) =P_{m}\left( z \right) +\cdots +P_{m -j}\left( z \right)$ and
$Q\left( z \right) = Q_{n}\left( z \right) +\cdots +Q_{n-i}\left( z \right)$.
Then
\[
(j + i)! \left\Vert P \cdot   Q \right\Vert _{a}^{2}
\ge
\left\Vert P \right\Vert _{a}^{2}   \left\Vert Q \right\Vert _{a}^{2}.
\]		
\end{theorem}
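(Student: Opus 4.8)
The plan is to mirror the proof of Theorem \ref{BombieriType} line for line, changing only the factorial coming from homogenization. The whole argument rests on the chain
\[
C\,\|PQ\|_a^2 \ge \|(PQ)_{hom}\|_a^2 = \|P_{hom}Q_{hom}\|_a^2 \ge \|P_{hom}\|_a^2\,\|Q_{hom}\|_a^2 \ge \|P\|_a^2\,\|Q\|_a^2,
\]
where the first step is the upper bound of Lemma \ref{homogenization}, the equality is Lemma \ref{commute}, the middle inequality is Bombieri's inequality in the homogeneous case (applied to the homogeneous polynomials $P_{hom},Q_{hom}$), and the last step is the first (trivial) inequality of Lemma \ref{homogenization} applied to $P$ and to $Q$. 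In Theorem \ref{BombieriType} one is forced to take $C=(m+n)!$, the factorial of the total degree of the product. The point is that the gap hypotheses let us take $C$ much smaller.

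First I would record the sharpened homogenization estimate. Suppose $R=R_k+\cdots+R_{k-\ell}$ is a polynomial all of whose nonzero homogeneous components have degree between $k-\ell$ and $k$. Then $R_{hom}=\sum_{r=0}^{\ell} w^r R_{k-r}$, so the highest power of $w$ that occurs is $w^\ell$ rather than $w^k$. Rerunning the computation of Lemma \ref{homogenization}, the distinct $w$-exponents make the summands apolar-orthogonal, and $\langle w^r R_{k-r}, w^r R_{k-r}\rangle_a = r!\,\langle R_{k-r},R_{k-r}\rangle_a$, whence
\[
\|R_{hom}\|_a^2 = \sum_{r=0}^{\ell} r!\,\|R_{k-r}\|_a^2 \le \ell!\,\sum_{r=0}^{\ell}\|R_{k-r}\|_a^2 = \ell!\,\|R\|_a^2.
\]
Thus the relevant constant is the factorial of the \emph{spread} of degrees occupied by $R$, not of its total degree.

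It remains to bound this spread for the product $PQ$. Every term of $P$ has degree at least $m-j$ and every term of $Q$ has degree at least $n-i$, so every term of $PQ$ has degree at least $m+n-i-j$; and since $\mathbb{C}[z_1,\dots,z_d]$ is an integral domain, $P_m Q_n\ne 0$, so the total degree of $PQ$ is exactly $m+n$. Consequently $(PQ)_{hom}$ involves $w$ only through the powers $w^0,\dots,w^{i+j}$, and the sharpened estimate gives $(i+j)!\,\|PQ\|_a^2 \ge \|(PQ)_{hom}\|_a^2$. Feeding $C=(i+j)!$ into the chain above yields the asserted inequality. I do not anticipate a genuine obstacle, since the scaffolding is identical to that of Theorem \ref{BombieriType}; the only new idea is that the homogenization constant is governed by the length of the block of degrees actually present in a polynomial rather than by its total degree. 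The one point deserving care is certifying this block for $PQ$ — pinning down its top degree via the integral-domain property and its bottom degree via the gap hypotheses — so as to guarantee that the power of $w$ never exceeds $i+j$.
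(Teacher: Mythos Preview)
Your proposal is correct and follows precisely the route the paper indicates: the paper states that Theorem \ref{BombieriTypebis} holds ``with the same proof'' as Theorem \ref{BombieriType}, and your write-up spells out exactly that adaptation --- sharpening Lemma \ref{homogenization} so that the factorial is governed by the spread $\ell$ of degrees present, then checking that the spread of $PQ$ is at most $i+j$. The verification that $\deg(PQ)=m+n$ via the integral-domain property is the right way to pin down the top of the block.
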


\section{Obtaining better bounds in some special cases}

Next we present some immediate improvements for certain special cases. If all the coefficients of $P$ and of $Q$ are non-negative, then cancellation cannot occur and thus Bombieri's inequality holds.

\begin{theorem} \label{BombieriPos}
	Let $P , Q  \in \mathbb{C}[z_{1},\dots,z_{d}]$ be nonzero 
	polynomials with real, non-negative coefficients. Then
	\[
\left\Vert P \cdot Q \right\Vert _{a}
\ge
\left\Vert P \right\Vert _{a}  \left\Vert Q \right\Vert _{a}.
\]		
\end{theorem}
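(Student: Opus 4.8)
The plan is to expand everything in terms of monomial coefficients and exploit the fact that non-negativity rules out the cancellation responsible for the failure of the constant-$1$ inequality in the general case. Write $P(z) = \sum_\alpha a_\alpha z^\alpha$ and $Q(z) = \sum_\beta b_\beta z^\beta$ with all $a_\alpha, b_\beta \ge 0$. The coefficient of $z^\gamma$ in the product is $c_\gamma = \sum_{\alpha + \beta = \gamma} a_\alpha b_\beta$, so that by the monomial description of the apolar norm recalled after \eqref{apolar},
\[
\|PQ\|_a^2 = \sum_\gamma \gamma!\, c_\gamma^2 = \sum_\gamma \gamma! \Bigl( \sum_{\alpha + \beta = \gamma} a_\alpha b_\beta \Bigr)^2.
\]

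The key step is the observation that, because every summand $a_\alpha b_\beta$ is non-negative, the square of the sum dominates the sum of the squares: for fixed $\gamma$,
\[
\Bigl( \sum_{\alpha + \beta = \gamma} a_\alpha b_\beta \Bigr)^2 \ge \sum_{\alpha + \beta = \gamma} a_\alpha^2 b_\beta^2,
\]
since expanding the left-hand side produces exactly the right-hand side plus a collection of non-negative cross terms $2 a_\alpha b_\beta a_{\alpha'} b_{\beta'}$. This is precisely the point where the hypothesis of non-negative coefficients enters; without it the cross terms could be negative, which is what allows cancellation (as in the example $(x-1)(x+1)$ of Section 3).

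After discarding the cross terms I would re-index the resulting double sum over pairs $(\alpha, \beta)$ by setting $\gamma = \alpha + \beta$, obtaining
\[
\|PQ\|_a^2 \ge \sum_{\alpha,\beta} (\alpha + \beta)!\, a_\alpha^2 b_\beta^2.
\]
The proof then reduces to the elementary multi-index factorial inequality $(\alpha + \beta)! \ge \alpha!\, \beta!$, which holds coordinatewise because $(\alpha_i + \beta_i)! \ge \alpha_i!\, \beta_i!$ for each $i$ (equivalently, the binomial coefficient $\binom{\alpha_i + \beta_i}{\alpha_i}$ is at least $1$), and hence holds after taking products over the $d$ coordinates. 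Applying this termwise gives
\[
\|PQ\|_a^2 \ge \sum_{\alpha,\beta} \alpha!\, \beta!\, a_\alpha^2 b_\beta^2 = \Bigl( \sum_\alpha \alpha!\, a_\alpha^2 \Bigr) \Bigl( \sum_\beta \beta!\, b_\beta^2 \Bigr) = \|P\|_a^2 \|Q\|_a^2,
\]
and taking square roots yields the claim. I do not anticipate a genuine obstacle here: the argument is entirely elementary once the non-negativity is used to pass from the square of a sum to the sum of squares. The only subtlety worth stating explicitly is that this single inequality is exactly what distinguishes the non-negative case from the general one.
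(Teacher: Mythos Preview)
Your argument is correct, but it proceeds along a different axis from the paper's. The paper decomposes $P$ and $Q$ into their homogeneous components $P=\sum_j P_j$, $Q=\sum_k Q_k$, observes that non-negativity of the coefficients forces every cross term $\langle P_jQ_k, P_rQ_s\rangle_a$ to be non-negative, drops those cross terms, and then invokes the homogeneous Bombieri inequality $\|P_jQ_k\|_a \ge \|P_j\|_a\|Q_k\|_a$ before factoring the resulting double sum. You instead work one level finer, at the monomial scale: non-negativity lets you drop the cross terms inside each $c_\gamma^2$, and the role of Bombieri's inequality is replaced by the elementary factorial bound $(\alpha+\beta)! \ge \alpha!\,\beta!$. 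Your route is more self-contained, since it does not appeal to the homogeneous Bombieri inequality at all; the paper's route is structurally cleaner in that it isolates exactly where the known homogeneous result enters. Both exploit non-negativity in the same way---to discard potentially negative cross terms---just at different levels of the decomposition.
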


\begin{proof} Let $P$ be a polynomial of total degree $m$, say 
\begin{equation*}
P\left( z \right) =P_{m}\left( z \right) +\cdots +P_{0}\left( z \right),
\end{equation*}
where for each $j=0, \dots ,m,$
 $P_{j}\left( z\right) $ is a homogeneous polynomial
of total degree $j$, and likewise, write
\begin{equation*}
Q\left( z \right) = Q_{n} \left( z \right) +\cdots +Q_{0}\left( z \right).
\end{equation*}
Since the coefficients of $P$ and $Q$ are non-negative, it follows from the definition of the apolar inner product that for all $j,k,r,s$ we always have $\left\langle  P_j Q_k, P_r Q_s\right\rangle_{a} \ge 0$. Thus 
\begin{equation}
\left\langle P Q, P Q\right\rangle_{a} 
=
\left\langle \sum_{j=0}^m
\sum_{k = 0}^{n} P_j Q_k, \sum_{j=0}^m \sum_{k = 0}^{n} P_j Q_k\right\rangle_{a} 
=
\sum_{j=0}^m \sum_{k = 0}^{n} \sum_{r=0}^m
\sum_{s = 0}^{n} \left\langle  P_j Q_k, P_r Q_s\right\rangle_{a} 
\end{equation}
\begin{equation}
\ge
\sum_{j=0}^m \sum_{k = 0}^{n}  \left\langle  P_j Q_k, P_j Q_k\right\rangle_{a} 
=
\sum_{j=0}^m \sum_{k = 0}^{n}  \left\| P_j Q_k\right\|_{a}^2
\ge
\sum_{j=0}^m \sum_{k = 0}^{n}  \left\| P_j \right\|_{a}^2 \left\|Q_k\right\|_{a}^2
\end{equation}
\begin{equation}
=
\sum_{j=0}^m   \left\| P_j \right\|_{a}^2 \sum_{k = 0}^{n} \left\|Q_k\right\|_{a}^2
= 
\left\| P \right\|_{a}^2 \left\| Q \right\|_{a}^2.
\end{equation}
\end{proof}

In order to try to lower the constant $(m + n)!$ from Theorem \ref{BombieriType} when dealing with two polynomials, one can search for homogenizations that  raise the norms of the homogenized polynomials by a smaller amount.

 Given $P$ of  degree $k$, say
\begin{equation*}
P\left( z \right) =P_{k}\left( z \right) +\cdots +P_{0}\left( z \right),
\end{equation*}
we select  variables $w_1, \dots, w_k$ different from $z_1, \dots, z_d$ (but we do not assume that all the variables $w_1, \dots, w_k$ are different among themselves), and define a {\em many variables homogenization} of $P$ as
\begin{equation*}
P_{hom} \left( z , w_1, \dots, w_k \right) = P_{k}\left( z \right) + w_1 P_{k -1}\left( z \right) + w_1 w_2 P_{k -2}\left( z \right)  + \cdots +  w_1 \cdots w_k P_{0}\left( z \right).
\end{equation*}

Suppose all the variables appearing in $P$ and $Q$ are raised to an even degree. Then a two variables homogenization leads to an improvement of the constant in Theorem \ref{BombieriType}. Let us denote by $2 \N$ the set of  even natural numbers. 

\begin{theorem} \label{BombieriEven}
	Let $m, n \in 2\N \setminus \{0\}$, and let $P , Q \in \mathbb{C}[z_{1},\dots,z_{d}]$ be nonzero 
	polynomials of the form \[
P\left(  z \right)  =\sum_{\alpha\in (2 \N)^d,\left\vert
\alpha\right\vert \leq m}c_{\alpha}z ^{\alpha}\text{ and }Q\left(  z\right)
=\sum_{\alpha\in (2 \N)^d,\left\vert \alpha\right\vert \leq
n}d_{\alpha}z^{\alpha}.
\] 
 Then
	\[
[\left(m + n\right)/2]! \left\Vert P \cdot Q \right\Vert _{a}
\ge
\left\Vert P \right\Vert _{a}  \left\Vert Q \right\Vert _{a}.
\]		
\end{theorem}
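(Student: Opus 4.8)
The plan is to adapt the homogenization argument behind Theorem~\ref{BombieriType}, but to exploit the even-degree hypothesis by homogenizing with the \emph{product} of two fresh variables instead of with powers of a single one. Since every exponent $\alpha$ occurring in $P$ or $Q$ lies in $(2\N)^d$, each monomial has even total degree $|\alpha|$; hence only the even-degree homogeneous parts survive, and I may write $P = P_m + P_{m-2} + \cdots + P_0$ and $Q = Q_n + Q_{n-2} + \cdots + Q_0$ with $P_{m-2\ell}$, $Q_{n-2\ell}$ homogeneous of the indicated degrees. I introduce two new variables $u, v$, distinct from $z_1, \dots, z_d$ and from each other, and set
$$
\widehat{P}(z,u,v) = \sum_{\ell \ge 0} (uv)^\ell P_{m-2\ell}(z), \qquad \widehat{Q}(z,u,v) = \sum_{\ell \ge 0} (uv)^\ell Q_{n-2\ell}(z),
$$
which are homogeneous of degrees $m$ and $n$ respectively, as $(uv)^\ell$ contributes degree $2\ell$.

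First I would record that this homogenization commutes with products, $\widehat{P}\,\widehat{Q} = \widehat{PQ}$, in complete analogy with Lemma~\ref{commute}: the product $PQ$ again has only even-degree homogeneous parts, and since $(uv)^a (uv)^b = (uv)^{a+b}$ the quantity $uv$ plays exactly the role previously played by $w$, so that grouping $\sum_{a,b}(uv)^{a+b} P_{m-2a} Q_{n-2b}$ by the value of $a+b$ reproduces the even-degree decomposition of $PQ$.

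Next comes the norm comparison. On monomials one has $\|z^\alpha u^\ell v^\ell\|_a^2 = (\ell!)^2\,\alpha!$, so the layers $(uv)^\ell P_{m-2\ell}$ are mutually orthogonal and $\|\widehat{P}\|_a^2 = \sum_{\ell} (\ell!)^2 \|P_{m-2\ell}\|_a^2 \ge \|P\|_a^2$, because $(\ell!)^2 \ge 1$; the same holds for $Q$. Writing $R := PQ$, of total degree $m+n$, the identical computation gives $\|\widehat{R}\|_a^2 = \sum_{\ell} (\ell!)^2 \|R_{m+n-2\ell}\|_a^2$ with $\ell$ now ranging up to $(m+n)/2$, and bounding $(\ell!)^2 \le [(m+n)/2]!^{\,2}$ yields
$$
\|\widehat{PQ}\|_a^2 = \|\widehat{R}\|_a^2 \le [(m+n)/2]!^{\,2}\,\|PQ\|_a^2 .
$$
This is exactly where the even-degree hypothesis pays off: using two \emph{distinct} variables gives the weight $\|u^\ell v^\ell\|_a^2 = (\ell!)^2$ in place of the $\|w^{2\ell}\|_a^2 = (2\ell)!$ produced by a single-variable homogenization, and since $(\ell!)^2 \le (2\ell)!$ the constant drops from $(m+n)!$ to $[(m+n)/2]!^{\,2}$.

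Finally I would chain the estimates. Bombieri's inequality applied to the homogeneous polynomials $\widehat{P}$, $\widehat{Q}$ gives $\|\widehat{P}\,\widehat{Q}\|_a^2 \ge \|\widehat{P}\|_a^2 \|\widehat{Q}\|_a^2$, whence
$$
[(m+n)/2]!^{\,2}\,\|PQ\|_a^2 \ge \|\widehat{PQ}\|_a^2 = \|\widehat{P}\,\widehat{Q}\|_a^2 \ge \|\widehat{P}\|_a^2 \|\widehat{Q}\|_a^2 \ge \|P\|_a^2 \|Q\|_a^2,
$$
and taking square roots gives the assertion. I expect the only delicate point to be the verification that the two-variable homogenization still commutes with products under the even-degree restriction (the analog of Lemma~\ref{commute}); the norm estimate is then a routine computation once the layer orthogonality and the monomial identity $\|z^\alpha u^\ell v^\ell\|_a^2 = (\ell!)^2 \alpha!$ are in place.
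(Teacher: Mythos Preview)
Your proof is correct and follows essentially the same route as the paper: a two-variable homogenization via $(uv)^\ell$ (the paper writes $(w_1w_2)^{m/2-j}$), the commutation with products, the layer-by-layer norm identity yielding the weight $(\ell!)^2$, and then Bombieri's inequality for the resulting homogeneous polynomials. Your added remark contrasting $(\ell!)^2$ with $(2\ell)!$ nicely explains why the two-variable trick improves the constant.
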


\begin{proof} We use the following two variable homogenization: write $P$ as 
\begin{equation*}
P\left( z \right) =P_{m}\left( z \right) + \cdots +P_{2}  (z) +P_{0}\left( z \right),
\end{equation*}
where for each even $j=0, \dots ,m,$
 the polynomial
 $P_{j}\left( z\right) $ is homogeneous of degree $j$, and set 
\begin{equation*}
P_{hom}\left( z , w_1, w_2\right) =P_{m}\left( z \right) + \cdots + (w_1 w_2)^{(m - 2)/2 } P_{2}  (z) + (w_1 w_2)^{m/2 } P_{0}\left( z \right),
\end{equation*}
doing likewise with $Q$ and $PQ$. By arguing as in Lemma \ref{commute}, we see that this homogenization also commutes with products, so $P_{hom} Q_{hom} = (PQ)_{hom}$. And as in Lemma \ref{homogenization}, we see that
\begin{equation*}
\|P_{hom}\|_a^2
=
\left\langle P_{hom} ,P_{hom} \right\rangle_{a}   
=  
\left\langle \sum_{j = 0}^{m/2}  (w_1 w_2)^{m/2 - j} P_{2j},\sum_{j = 0}^{m/2}   (w_1 w_2)^{m/2 - j} P_{2j}\right\rangle_{a}
\end{equation*}
\begin{equation*}
=  
\sum_{j = 0}^{m/2} \left\langle  (w_1 w_2)^{m/2 - j} P_{2j},  (w_1 w_2)^{m/2 - j} P_{2j} \right\rangle_{a}
=  
\sum_{j = 0}^{m/2} ((m/2 - j)!)^2\left\langle   P_{2j}, P_{2j}\right\rangle_{a}
\end{equation*}
\begin{equation*}
\le
((m/2)!)^2 \sum_{j = 0}^{m/2} \left\langle   P_{2j},  P_{2j} \right\rangle_{a} = ((m/2)!)^2 \|P\|_a^2.
\end{equation*}
And now we argue as in the proof of Theorem \ref{BombieriType}.
\end{proof}

For some purposes, a well known integral representation of the apolar inner product due to  V. Bargmann, cf. \cite{Bar}, turns out to be useful  (see also \cite{NeSh66} and \cite{Pina12}).

\begin{theorem} \label{barg}
Let $P$ and $Q$ be polynomials in $d$ complex variables. Then
\begin{equation}
\left\langle P,Q\right\rangle _{a}=\frac{1}{\pi^{d}}\int_{\mathbb{R}^{d}}
\int_{\mathbb{R}^{d}}P\left(  x+iy\right)  \overline{Q\left(  x+iy\right)
}e^{-\left|  x\right|  ^{2}-\left|  y\right|  ^{2}}dxdy<\infty\label{Bargman},
\end{equation}
where $dxdy$ is Lebesgue measure on $\mathbb{R}^{2d}$.
\end{theorem}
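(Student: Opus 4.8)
The plan is to exploit the sesquilinearity of both sides and reduce the identity to the case of monomials. Both $\langle \cdot,\cdot\rangle_a$ and the right-hand side of (\ref{Bargman}) are linear in the first argument and conjugate-linear in the second; hence, writing $P$ and $Q$ in the monomial basis, it suffices to establish the identity when $P = z^\alpha$ and $Q = z^\beta$. Since the coefficient of such a monomial is $1$, one has $\overline{Q(x+iy)} = (x - iy)^\beta$, and recalling from the definition of the apolar product that $\langle z^\alpha, z^\beta\rangle_a = \alpha!$ if $\alpha = \beta$ and $0$ otherwise, the target reduces to
\begin{equation*}
\frac{1}{\pi^d}\int_{\R^d}\int_{\R^d} (x + iy)^\alpha (x - iy)^\beta e^{-|x|^2 - |y|^2}\,dx\,dy = \begin{cases}\alpha! & \text{if } \alpha = \beta,\\ 0 & \text{if } \alpha \ne \beta.\end{cases}
\end{equation*}

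Next I would factor the integral over the $d$ coordinates. The Gaussian weight splits as $e^{-|x|^2 - |y|^2} = \prod_{j=1}^d e^{-x_j^2 - y_j^2}$, and $(x + iy)^\alpha (x - iy)^\beta = \prod_{j=1}^d (x_j + iy_j)^{\alpha_j}(x_j - iy_j)^{\beta_j}$, so by Fubini the whole expression is a product of $d$ one-variable integrals, and it is enough to treat $d = 1$. For the one-dimensional integral I would pass to polar coordinates $x_j + iy_j = re^{i\theta}$, with $dx_j\,dy_j = r\,dr\,d\theta$, turning the integrand into $r^{\alpha_j + \beta_j}e^{i(\alpha_j - \beta_j)\theta}e^{-r^2}$. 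The angular integral $\int_0^{2\pi} e^{i(\alpha_j - \beta_j)\theta}\,d\theta$ vanishes unless $\alpha_j = \beta_j$, which produces the orthogonality; and when $\alpha_j = \beta_j$, the angular integral equals $2\pi$ and, substituting $u = r^2$, the radial integral $\int_0^\infty r^{2\alpha_j+1}e^{-r^2}\,dr$ equals $\tfrac12\Gamma(\alpha_j + 1) = \tfrac12\alpha_j!$, so that, combined with the prefactor $1/\pi$, the one-variable integral equals $\alpha_j!$. Taking the product over $j$ recovers $\alpha!$ on the diagonal and $0$ off it.

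The only point requiring care is the justification of finiteness and of Fubini's theorem, which is also the content of the assertion $< \infty$ in (\ref{Bargman}): since $P\overline{Q}$ is a polynomial, it has at most polynomial growth, while $e^{-|x|^2 - |y|^2}$ decays faster than any polynomial, so the integrand is absolutely integrable on $\R^{2d}$; this legitimizes both the coordinatewise factorization and the interchange leading to the polar computation. I do not expect a genuine obstacle here: the argument is a routine Gaussian moment computation, and the main work is simply organizing the reduction to monomials and the single-variable integral cleanly.
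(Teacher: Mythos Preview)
Your proof is correct and is the standard argument for Bargmann's representation: reduce by sesquilinearity to monomials, factor over coordinates, and compute the one-variable Gaussian moment in polar coordinates. The paper, however, does not supply its own proof of this theorem; it merely states the result and attributes it to Bargmann with references to \cite{Bar}, \cite{NeSh66}, and \cite{Pina12}. So there is nothing to compare against beyond noting that your argument is exactly the classical one those references contain.
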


The next equality result for the homogeneous case is due to B. Reznick, cf. \cite[Main Theorem]{Rezn93}. In fact, Reznick's result is a characterization. We present the ``if'' part and note that the ``only if'' direction does not hold
when both polynomials are non-homogeneous.

\begin{theorem} \label{BombieriJensen} Let $P, Q  \in \mathbb{C}[z_{1},\dots,z_{d}]$, and suppose there is a linear change of variables 
	 $T: \C^d \to\C$ which makes $P$ and $Q$ depend on disjoint sets of variables. If the absolute value of the Jacobian determinant of $T$ takes the constant value $1$, then  
	\[
\left\Vert P \cdot Q \right\Vert _{a}
=
\left\Vert P \right\Vert _{a}  \left\Vert Q \right\Vert _{a}.
\]		
\end{theorem}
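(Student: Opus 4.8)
The plan is to exploit the multiplicative structure of the apolar inner product under a change of variables that separates $P$ and $Q$ into disjoint variable blocks. The key preliminary observation is that the apolar inner product is invariant under the group of unitary changes of variables; more relevant here, a linear substitution whose Jacobian has absolute value $1$ preserves the Bargmann integral representation of Theorem \ref{barg}, because the Gaussian weight $e^{-|x|^2-|y|^2}$ is invariant under unitary maps and, more generally, the integral transforms covariantly under volume-preserving linear changes of coordinates. I would first establish this invariance precisely: if $T$ has $|\det T|=1$, then $\langle P\circ T, Q\circ T\rangle_a = \langle P, Q\rangle_a$, and in particular $\|R\circ T\|_a = \|R\|_a$ for every polynomial $R$, and products are preserved since $(PQ)\circ T = (P\circ T)(Q\circ T)$.

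Granting this invariance, the problem reduces to the case where $P$ and $Q$ already depend on disjoint sets of variables. So the second step is to prove the following clean sub-statement: if $P \in \mathbb{C}[z_1,\dots,z_k]$ and $Q \in \mathbb{C}[z_{k+1},\dots,z_d]$ involve disjoint variable sets, then $\|PQ\|_a = \|P\|_a\,\|Q\|_a$. The natural route is the Bargmann integral: since the variables separate, the double integral over $\mathbb{R}^{2d}$ factors as a product of an integral over the variables appearing in $P$ and an integral over those appearing in $Q$ (the Gaussian weight itself factors as $e^{-|x'|^2-|y'|^2}\,e^{-|x''|^2-|y''|^2}$), whence $\|PQ\|_a^2 = \|P\|_a^2\,\|Q\|_a^2$. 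Alternatively, one could argue combinatorially from the definition in \eqref{apolar}: a monomial of $PQ$ splits uniquely as (monomial in the first block)$\times$(monomial in the second block), the multi-index factorial factors as $\alpha! = \alpha'!\,\alpha''!$, and the inner-product sum decouples; this avoids invoking Bargmann but requires bookkeeping with the monomials.

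Combining the two steps gives the theorem: applying invariance under $T$, we have $\|PQ\|_a = \|(PQ)\circ T\|_a = \|(P\circ T)(Q\circ T)\|_a$, and since $P\circ T$ and $Q\circ T$ depend on disjoint variables, this equals $\|P\circ T\|_a\,\|Q\circ T\|_a = \|P\|_a\,\|Q\|_a$.

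I expect the main obstacle to be the first step, namely verifying that $|\det T|=1$ genuinely implies $\langle P\circ T, Q\circ T\rangle_a = \langle P, Q\rangle_a$. The subtlety is that a general complex-linear $T$ with $|\det T|=1$ need not be unitary, yet the apolar inner product is only manifestly invariant under the \emph{unitary} group. If invariance fails for non-unitary volume-preserving maps, then the hypothesis must be read as requiring $T$ to be unitary (so that both variable-separation and norm-preservation hold), and I would need to check the change-of-variables computation in the Bargmann integral carefully to see exactly which maps preserve $\langle\cdot,\cdot\rangle_a$; the correct and robust statement is almost certainly that $T$ is a unitary map, for which the Gaussian integral and hence the apolar norm are invariant. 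Once the invariance class is pinned down, the rest of the argument is the routine factorization described above.
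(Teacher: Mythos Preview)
Your overall strategy coincides with the paper's: invoke the Bargmann integral representation (Theorem~\ref{barg}), perform the linear change of variables, and then use Fubini to factor the integral over the two disjoint blocks of variables. The paper's proof is a two-line sketch saying exactly this, so on that level there is no difference.

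What is more interesting is that the obstacle you flag is real, and in fact shows the hypothesis as written is too weak. The apolar norm is invariant under \emph{unitary} changes of variables, because only then is the Gaussian weight $e^{-|z|^2}$ preserved; mere $|\det T|=1$ does not suffice. Here is a concrete failure: take $P=z_1+z_2$ and $Q=z_1$ in $\mathbb{C}[z_1,z_2]$, and let $T(w_1,w_2)=(w_1,\,-w_1+w_2)$. Then $\det T=1$, and $Q\circ T=w_1$, $P\circ T=w_2$ depend on disjoint variables. Yet $\|P\|_a^2=2$, $\|Q\|_a^2=1$, while $PQ=z_1^2+z_1 z_2$ gives $\|PQ\|_a^2=2!+1!\cdot 1!=3$, so $\|PQ\|_a=\sqrt{3}\neq\sqrt{2}=\|P\|_a\|Q\|_a$. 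Thus the statement is false under the bare hypothesis $|\det T|=1$; the intended (and correct) hypothesis, as in Reznick's original, is that $T$ be unitary. With that reading your two-step argument goes through cleanly, and the paper's sketch does as well.
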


\begin{proof} Use the change of variables formula  and the integral representation from Theorem  \ref{barg}, integrating first with respect to the variables that appear in $P$ after $T$ is applied, and then with respect to the remaining variables.
\end{proof}

\begin{example} It is possible to have equality for non-constant polynomials of one variable, so the ``only if''  part of Reznick's characterization (stating that equality occurs only if after a suitable transformation, the polynomials depend on dijoint sets of variables) does not extend to the non-homogeneous case. 

For $0 \le t \le 1$ define $f(t):= \left\Vert (-1 + x)  \cdot (t + x) \right\Vert _{a}^2$, so $f(0) = f(1) = 3$, and set  $g(t):= \left\Vert (-1 + x)  \right\Vert _{a}^2 \left\Vert (t + x) \right\Vert _{a}^2$, so $g(0) = 2$ and  $ g(1) = 4$. Since both $f$ and $g$ are continuous functions of $t$, there exists a $c \in (0,1)$ such that $f(c) = g(c) $.\end{example}

If $Q$ is obtained from $P$ by deleting some terms, then $
\left\Vert Q  \right\Vert _{a}
\le
\left\Vert P  \right\Vert _{a}
$ by orthogonality.
It would be useful to have some analogous result when taking products. More
precisely, it is natural to ask whether the following ``monotonicity property'' holds:
let $P , Q  \in \mathbb{C}[z_{1},\dots,z_{d}]$ be nonzero 
polynomials such that $P$ has degree $n$, and suppose $P_{n + 1}$ is homogeneous of degree $n + 1$. Is it true that 
\[
\left\Vert (P_{n + 1} + P) \cdot Q \right\Vert _{a}
\ge
\left\Vert  P \cdot Q \right\Vert _{a}?
\]
The answer is no, as the following example shows.
Let $P (x)= 1$, $P_1(x) = tx$ and $Q(x) = 1 - tx$. Then
\[
\left\Vert \left(  1 + tx \right)  \left(  1 - tx \right)  \right\Vert _{a}^{2}
=
\left\Vert 1 - t^{2} x^{2}\right\Vert _{a}^{2}
=
1 + 2 t^4,
\]
while
\[
\left\Vert   1\cdot  \left(  1 - tx \right)  \right\Vert _{a}^{2}
=
1 + t^2.
\]
Taking $0 < t < 1/2$ we get $
\left\Vert (P_{1} + P) \cdot Q \right\Vert _{a}
<
\left\Vert  P \cdot Q \right\Vert _{a}.$

\section{Iterated products of a polynomial}

It follows from Bombieri's  inequality (\ref{multBombieri}) for several polynomials that if $P$ is 
\emph{homogeneous}, then for	$s\geq1$, $s\in \N$, we have
$
\left\Vert P^{s}\right\Vert _{a}\geq\left\Vert P\right\Vert _{a}^{s}.
$
An immediate consequence of Bargmann's integral representation is that this inequality holds for $P$ arbitrary.

\begin{theorem} \label{BombieriJensen2}
	Let $P  \in \mathbb{C}[z_{1},\dots,z_{d}]$. Then for every $s\geq1$, $s\in \N$, we have
	\[
	\left\Vert P^{ s}  \right\Vert _{a}
	\ge
	\left\Vert P \right\Vert _{a}^{ s}.
	\]		
\end{theorem}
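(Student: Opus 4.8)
The plan is to use the Bargmann integral representation from Theorem \ref{barg}, which expresses $\left\Vert P^s \right\Vert_a^2 = \langle P^s, P^s \rangle_a$ as an integral of $|P^s(x+iy)|^2 = |P(x+iy)|^{2s}$ against the Gaussian weight $e^{-|x|^2 - |y|^2}$ over $\mathbb{R}^{2d}$. The key observation is that the normalized Gaussian measure
\[
d\mu = \frac{1}{\pi^d} e^{-|x|^2 - |y|^2}\, dx\, dy
\]
is a \emph{probability} measure on $\mathbb{R}^{2d}$ (this is the case $P = Q = 1$ of Theorem \ref{barg}, giving total mass $\langle 1,1\rangle_a = 1$). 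Writing $\varphi(x,y) := |P(x+iy)|^2 \ge 0$, we then have $\left\Vert P^s \right\Vert_a^2 = \int \varphi^s \, d\mu$ and $\left\Vert P \right\Vert_a^2 = \int \varphi \, d\mu$, so the desired inequality $\left\Vert P^s \right\Vert_a \ge \left\Vert P \right\Vert_a^s$ is exactly
\[
\left( \int \varphi^s \, d\mu \right)^{1/2} \ge \left( \int \varphi \, d\mu \right)^{s/2},
\]
that is, $\int \varphi^s \, d\mu \ge \left( \int \varphi \, d\mu \right)^s$.

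The main step is thus a direct application of Jensen's inequality for the convex function $t \mapsto t^s$ on $[0,\infty)$ (valid since $s \ge 1$), applied to the nonnegative integrable function $\varphi$ against the probability measure $\mu$. Jensen gives $\int \varphi^s \, d\mu \ge \left( \int \varphi \, d\mu \right)^s$ immediately, and taking square roots yields the claim. Equivalently, one could invoke the monotonicity of $L^p(\mu)$-norms on a probability space: since $\|\cdot\|_{L^{2s}(\mu)} \ge \|\cdot\|_{L^2(\mu)}$ for $s \ge 1$, applying this to $\sqrt{\varphi} = |P(x+iy)|$ gives $\left\Vert P^s \right\Vert_a = \|P\|_{L^{2s}(\mu)}^s \ge \|P\|_{L^2(\mu)}^s = \left\Vert P \right\Vert_a^s$, where $\|P\|_{L^p(\mu)}$ denotes the $L^p$ norm of the function $(x,y) \mapsto P(x+iy)$.

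I do not expect a genuine obstacle here, since integrability is guaranteed by Theorem \ref{barg} (the integral is finite) and the measure is finite with total mass $1$. The only point requiring a word of care is confirming that $\mu$ is a probability measure, which follows from $\langle 1, 1\rangle_a = 0! = 1$ via the representation \eqref{Bargman}; this is what makes Jensen applicable without an extra normalizing constant. Everything else is a standard one-line consequence of convexity.
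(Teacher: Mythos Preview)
Your proof is correct and follows essentially the same approach as the paper: apply Jensen's inequality, for the convex function $t\mapsto t^s$ (the paper writes $|t|^s$, which coincides on $[0,\infty)$), to the Bargmann integral representation against the Gaussian probability measure. Your explicit verification that $\mu$ has total mass $1$ via $\langle 1,1\rangle_a=1$ is a nice touch that the paper leaves implicit.
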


\begin{proof} Apply Jensen's inequality to the integral representation from Theorem  \ref{barg}, using the convex function $\varphi (t) := |t|^s$, to conclude that $
	\left\Vert P^{ s}  \right\Vert _{a}^2
	\ge
	\left\Vert P \right\Vert _{a}^{2 s}.$
\end{proof}

Note that if $
\left\Vert P\right\Vert _{a} < 1$ then $\lim_{s\to \infty} \left\Vert P \right\Vert _{a}^{s} = 0$.  We show next that $
\left\Vert P^{ s}  \right\Vert _{a}
$ behaves very differently, provided $P$ is not constant.  

\begin{theorem}
	Let $P$ be a polynomial of degree $k\geq1.$ Then
	\[
	\lim_{s\rightarrow\infty}\sqrt[s]{\left\Vert P^{s}\right\Vert _{a}}=\infty.
	\]
	
\end{theorem}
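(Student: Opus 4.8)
The plan is to use the Bargmann integral representation of Theorem \ref{barg} to turn $\sqrt[s]{\|P^s\|_a}$ into an $L^{2s}$-norm against a fixed probability measure, and then to let $s\to\infty$. Applying (\ref{Bargman}) to the pair $(P^s,P^s)$ gives
\[
\|P^s\|_a^2=\int_{\R^d}\int_{\R^d}\bigl|P(x+iy)\bigr|^{2s}\,d\mu(x,y),\qquad d\mu:=\tfrac1{\pi^d}e^{-|x|^2-|y|^2}\,dx\,dy .
\]
Since $\int_{\R}e^{-t^2}\,dt=\sqrt\pi$, the measure $\mu$ on $\R^{2d}\cong\C^d$ has total mass $1$, so it is a probability measure. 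Writing $\|\cdot\|_{L^p(\mu)}$ for the associated $L^p$-norm, the displayed identity says $\|P^s\|_a^2=\|P\|_{L^{2s}(\mu)}^{2s}$, that is,
\[
\sqrt[s]{\|P^s\|_a}=\|P\|_{L^{2s}(\mu)} .
\]
Thus the theorem is exactly the assertion that these $L^{2s}$-norms tend to $+\infty$.

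First I would note that $P\in L^{p}(\mu)$ for every finite $p$: for even $p$ this is the finiteness asserted in Theorem \ref{barg} applied to $P^{p/2}$, and the general case follows by monotonicity of $L^p$-norms on a probability space. Next I would invoke the elementary fact that on a probability space $p\mapsto\|f\|_{L^p(\mu)}$ is non-decreasing with $\lim_{p\to\infty}\|f\|_{L^p(\mu)}=\|f\|_{L^\infty(\mu)}$, even when the right-hand side is $+\infty$. Specializing to $p=2s$ reduces the theorem to showing $\|P\|_{L^\infty(\mu)}=\infty$.

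It then remains to identify this essential supremum. Since $\mu$ is mutually absolutely continuous with Lebesgue measure, every nonempty open subset of $\C^d$ has positive $\mu$-measure, so $\|P\|_{L^\infty(\mu)}=\sup_{\C^d}|P|$. Because $\deg P=k\ge 1$, the principal part $P_k$ is a nonzero homogeneous polynomial, so $P_k(v)\ne 0$ for some $v\in\C^d$; then $t\mapsto P(tv)$ is a one-variable polynomial of degree $k$ with leading coefficient $P_k(v)$, whence $|P(tv)|\to\infty$ as $|t|\to\infty$. Therefore $\sup_{\C^d}|P|=\infty$ and the theorem follows.

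The only genuinely delicate point is the passage to the $L^\infty$-limit when that limit is infinite, and I expect this to be the step to state carefully. It can be made self-contained without quoting the general $L^p\to L^\infty$ theorem: for each $M>0$ the open set $\{|P|>M\}$ is nonempty, hence has positive $\mu$-measure, so
\[
\|P\|_{L^{2s}(\mu)}\ge M\,\mu\bigl(\{|P|>M\}\bigr)^{1/(2s)}\longrightarrow M \quad (s\to\infty).
\]
Letting $M\to\infty$ gives $\liminf_s\|P\|_{L^{2s}(\mu)}=\infty$, which is the claim. Everything else is routine bookkeeping with the Bargmann formula.
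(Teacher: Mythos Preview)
Your proof is correct and follows exactly the same route as the paper: rewrite $\sqrt[s]{\|P^s\|_a}$ as $\|P\|_{L^{2s}(\mu)}$ via Bargmann's representation and then let $s\to\infty$ to reach $\|P\|_{L^\infty(\mu)}=\infty$. You simply supply more detail than the paper does, in particular the explicit justification that $\|P\|_{L^\infty(\mu)}=\infty$ and the self-contained level-set argument for the $L^p\to L^\infty$ passage.
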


\begin{proof} Denote by $\gamma$ the gaussian probability measure on  $\R^{2d}$ from Bargmann's representation (\ref{Bargman}). Using Theorem \ref{barg} we obtain
	\[
	\left\Vert P^{s}\right\Vert _{a}^{2}
	= \left\Vert P\right\Vert _{L^{2s}(\mathbb{R}^{2d}, \gamma)}^{2s},
	\]
	so
	\[
	\lim_{s\rightarrow\infty}\sqrt[s]{\left\Vert P^{s}\right\Vert _{a}}
	=
	\lim_{s\rightarrow\infty} \left\Vert P\right\Vert _{L^{2s}(\mathbb{R}^{2d}, \gamma)}
	=
	\left\Vert P\right\Vert _{L^{\infty}(\mathbb{R}^{2d}, \gamma)}
	=
	\infty
	\]
	since $P$ is not constant.
\end{proof}

\smallskip \noindent \textbf{Data availability} Data sharing is not
applicable to this article since no data sets were generated or analyzed.

\smallskip \noindent \textbf{Declarations}

\smallskip \noindent \textbf{Conflict of interest} The authors declare that
they have no competing interests.

\smallskip \noindent \textbf{Funding}  \thanks{The first and second named authors were partially supported by Grants PID2019-106870GB-I00 and PID2022-138916NB-I00 (respectively) of the MICINN of Spain,  by ICMAT Severo Ochoa project
	CEX2019-000904-S (MICINN), and by
	the Madrid Government (Comunidad de Madrid - Spain)  V PRICIT (Regional Programme of Research and Technological Innovation), 2022-2024.}

\end{document}